\theoremstyle{plain}
\newtheorem{thm}{Theorem}[section]
\newtheorem{lem}[thm]{Lemma}%[section]
\newtheorem{cor}[thm]{Corollary}%[section] 
\newtheorem*{add}{Additivity property}
\newtheorem*{otopy_in}{Otopy invariance property}
\newtheorem*{existence}{Existence property}
\newtheorem*{normal}{Normalization property}
\newtheorem*{product}{Product property} 
\theoremstyle{definition}
\newtheorem{defn}[thm]{Definition}%[section]   
\theoremstyle{remark}
\newtheorem{rem}[thm]{Remark}%[section]
\numberwithin{equation}{section}
\newcommand{\wt}{\widetilde}
\newcommand{\R}{\mathbb{R}}
\newcommand{\N}{\mathbb{N}}
\newcommand{\G}{\mathcal{G}}
\newcommand{\fU}{\mathfrak{U}}
\newcommand{\restrictionmap}[2]{{#1}\mathpunct\restriction\hbox{}_{#2}}
\providecommand{\abs}[1]{\left\lvert#1\right\rvert}
\DeclareMathOperator{\id}{Id}
\DeclareMathOperator{\Deg}{Deg}
\DeclareMathOperator{\cl}{cl}
\title[Topological degree]
{Topological degree for equivariant gradient perturbations
of an unbounded self-adjoint operator in Hilbert space} 
\author[P. Bart{\l}omiejczyk]{Piotr Bart{\l}omiejczyk}
\address{Faculty of Applied Physics and Mathematics,
Gda{\'n}sk University of Technology,
Gabriela Narutowicza 11/12,
80-233 Gda{\'{n}}sk, Poland}
\email{piobartl@pg.edu.pl}
\author[B. Kamedulski]{Bartosz Kamedulski}
\address{Faculty of Navigation, Gdynia Maritime University, 
Jana Paw{\l}a II 3, 81-345 Gdynia, Poland}
\email{b.kamedulski@wn.umg.edu.pl}
\author[P. Nowak-Przygodzki]{Piotr Nowak-Przygodzki}
\address{Faculty of Applied Physics and Mathematics,
Gda{\'n}sk University of Technology,
Gabriela Narutowicza 11/12,
80-233 Gda{\'{n}}sk, Poland}
\email{piotrnp@wp.pl}
\date{\today}
\subjclass[2010]{Primary: 47H11; Secondary: 55P91}
\keywords{Topological degree, unbounded self-adjoint operator, 
equivariant gradient map.}
\begin{document}

\begin{abstract}
We present a version of 
the equivariant gradient degree defined
for equivariant gradient perturbations
of an equivariant unbounded self-adjoint operator 
with purely discrete spectrum in Hilbert space.
Two possible applications are discussed.
\end{abstract}

\maketitle

%XXXXXXXXXXXXXXXXXXXXXXXXXXXXXXX   Sec. Introduction  XXXXXXXXXXXXXXXXXXXXXXXXXXXXXXXX  

\section*{Introduction}\label{sec:intro}

To obtain new bifurcation results, 
N. Dancer \cite{D} introduced in 1985 a new topological invariant
for $S^1$-equivariant gradient maps, 
which provides more information than the usual equivariant one. 
In 1994 S.~Rybicki \cite{Ry1,Ry3} developed the complete
degree theory for $S^1$-equi\-variant gradient 
maps and 3 years later K. G\k{e}ba extended 
this theory to an arbitrary compact Lie group. 
In 2001 S.~Rybicki \cite{Ry2} defined the degree for 
$S^1$-equivariant strongly indefinite functionals in Hilbert space. 
10 years later A.~Go{\l}\k{e}biewska and S.~Rybicki \cite{GR1}
generalized this degree to compact Lie groups. 
The relation between equivariant and equivariant 
gradient degree theories were studied in~\cite{BGI,BP,GI}.

The main goal of this paper is 
to present a construction and properties of
a new degree-type topological invariant $\Deg_G^\nabla$,
which is defined for equivariant gradient perturbations of 
a equivariant unbounded self-adjoint 
Hilbert operator with a purely discrete spectrum 
(in the general case a compact Lie group).
As far as we know, 
the idea of the construction of such an invariant 
should be attributed to K. G\k{e}ba.

It is worth pointing out that
equivariant gradient perturbations of an
equivariant unbounded self-adjoint operator with a purely
discrete spectrum appear naturally in a variety 
of problems in nonlinear analysis, such as the search 
for periodic solutions of Hamiltonian systems or
the study of Seiberg-Witten equations for three
dimensional manifolds. The purpose of our work
is to provide a topological tool 
that allows us to solve problems
similar to the above mentioned ones. 

The paper is organized as follows.
Section~\ref{sec:prel} contains some preliminaries.
In Section~\ref{sec:degree} we present 
the construction that leads to 
the definition of the degree $\Deg_G^\nabla$.
The correctness of this definition is proved
in Section~\ref{sec:correct}.
The properties of the degree $\Deg_G^\nabla$
are examined in Section~\ref{sec:properties}.
Finally, in Section~\ref{sec:applic} we discuss
two examples of possible applications.

%XXXXXXXXXXXXXXXXXXXXXXXXXXXXXXX   Sec. Preliminaries  XXXXXXXXXXXXXXXXXXXXXXXXXXXXXXXX  

\section{Preliminaries}\label{sec:prel}
The preliminaries are divided into five brief subsections.

\subsection{Unbounded self-adjoint operators in Hilbert space}
This subsection is based on \cite{Sch}.
Let $E$ be a real separable Hilbert space with
inner product $\langle\cdot\mid\cdot\rangle$
and $A\colon D(A) \subset E \to E$ be a linear operator
(not necessarily bounded)
such that its domain $D(A)$ is dense in $E$. Set 
\[
D(A^*)=\{y \in E\mid \exists u \in E\,
\forall x\in D(A)\; 
\langle Ax\mid y\rangle=\langle x\mid u \rangle\}.
\]
Since $D(A)$ is dense in $E$, 
the vector $u \in E$ is uniquely determined by~$y$. 
Therefore by setting $A^*y=u$ we obtain 
a well-defined linear operator from $D(A^*)$ to $E$. 
The operator $A^*$ is called 
the \emph{adjoint} operator of~$A$.
We say that $A$ is \emph{self-adjoint} if $A=A^*$.
By the Hellinger-Toeplitz theorem, if $A$ is self-adjoint
and $D(A)=E$ then $A$ is bounded.

It is easy to see that
\[
\langle x\mid y\rangle_1=
\langle x\mid y\rangle+
\langle Ax\mid Ay\rangle
\]
defines an inner product on the domain $D(A)$.
Under this product $D(A)$ becomes a Hilbert space, 
which will be denoted by $E_1$.
Thus $D(A)$ and $E_1$ are equal as sets 
but equipped with different inner products.
Note that $A$ treated as an operator from 
$E_1$ to $E$ is bounded.
 
We say that a self-adjoint operator $A$ 
has a \emph{purely discrete spectrum}
if its spectrum consists only of 
isolated eigenvalues of finite multiplicity.
If $E$ is an infinite dimensional Hilbert space
then following conditions are equivalent:
\begin{enumerate}
	\item $A$ has a purely discrete spectrum.
	\item There is a real sequence $\{\lambda_n\}$ and an
	orthonormal basis $\{e_n\}$ such that
	$\lim\abs{\lambda_n}=\infty$ and $Ae_n=\lambda_n$
	for $n\in\N$.
	\item The embedding $\imath\colon E_1\to E$ is compact.
\end{enumerate}

\subsection{Local maps in Hilbert space}

Let
\begin{itemize}
	\item $E$ be a real Hilbert orthogonal representation
   of a compact Lie group $G$,
	\item $A\colon D(A)\subset E\to E$ 
  be an unbounded self-adjoint operator 
	with a purely discrete spectrum,
	\item $D(A)$ be invariant and $A$ equivariant.
\end{itemize}
 
\begin{defn}
We write $f\in\mathcal{G}_{G}(E)$ if
\begin{itemize}
	\item $f\colon D_f\subset E_1\to E$,
	where $D_f$ is an open invariant subset of $E_1$,
	\item $f(x)=Ax-\nabla\varphi(x)$, 
	where $\varphi\colon E\to\mathbb{R}$
	is $C^1$ and invariant,
	\item $f^{-1}(0)$ is compact.
\end{itemize}
\end{defn}
Elements of $\mathcal{G}_{G}(E)$ will be called \emph{local maps}.	

\subsection{Otopies in Hilbert space}
Let $I=[0, 1]$. Assume that $G$ acts trivially on $I$.
A map $h\colon\Lambda\subset I\times E_1\to E$ 
is called an \emph{otopy} if
\begin{itemize}
	\item $\Lambda$ is an open invariant subset of $I\times E_1$,
	\item $h(t,\cdot)\in \mathcal{G}_{G}(E)$ for each $t\in I$,
	\item $h^{-1}(0)$ is compact.
\end{itemize}

Given an otopy
$h\colon\Lambda\subset I\times E_1\to E$ 
we can define for each  $t\in I$:
\begin{itemize}
	\item sets $\Lambda_t=\{x\in E_1\mid(t,x)\in\Lambda\}$,
	\item maps $h_t\colon\Lambda_t\to E$ with $h_t(x)=h(t,x)$.
\end{itemize}
If $h$ is an otopy, we say that  
$h_0$ and $h_1$ are \emph{otopic}.
The relation of being otopic is
an equivalence relation in $\G_G(E)$.

Observe that if $f$ is a local map and $U$ is 
an open subset of $D_f$ such that $f^{-1}(0)\subset U$, 
then $f$ and $\restrictionmap{f}{U}$ are otopic.
This property of local maps
is called the \emph{restriction property}.
In particular, if $f^{-1}(0)=\emptyset$ then $f$ 
is otopic to the empty map.

\subsection{Euler-tom Dieck ring}
Recall the notion of the \emph{Euler-tom Dieck ring}
following \cite{T}.
For a compact Lie group $G$ 
let $\mathfrak{U}(G)$ denote the set 
of equivalence classes of finite $G$-CW-complexes.
Two complexes $X$ and $Y$ are identified
if the quotients $X^H/WH$ and  $Y^H/WH$
have the same Euler characteristic for
all closed subgroups $H$ of $G$.
Recall that $X^H$ stands here for
the $H$-fixed point set of $X$, i.e.
$X^H:=\{x\in X\mid hx=x\text{ for all $h\in H$}\}$
and $WH$ for the Weyl group of $H$, i.e. $WH=NH/H$.
Addition and multiplication in $\mathfrak{U}(G)$
are induced by disjoint union and cartesian product 
with diagonal $G$-action, i.e.\
\[
[X]+[Y]=[X\sqcup Y],\quad[X]\cdot[Y]=[X\times Y],
\] 
where the square brackets stand for an equivalence class of 
finite $G$-CW-complexes.
In this way $\mathfrak{U}(G)$ becomes a commutative ring 
with unit and is called the \emph{Euler-tom Dieck ring} of $G$.

Additively, $\mathfrak{U}(G)$ is a free abelian group 
with basis elements $[G/H]$,
where $H$ is a closed subgroup of $G$.
In consequence, each element of $\mathfrak{U}(G)$ 
can be uniquely written
as a finite sum $\sum d_{(H)}[G/H]$, 
where $d_{(H)}$ is an integer, 
which depends only on the conjugacy class of $H$. 
The ring unit is $[G/G]$.

\subsection{Finite dimensional equivariant gradient degree 
\texorpdfstring{$\deg^\nabla_G$}{}}
Assume that $V$ is a real finite dimensional orthogonal
representation of a compact Lie group $G$.
We write $f\in\G_G(V)$ 
if $f$ is an equivariant gradient map 
from an open invariant subset of $V$ to $V$ 
and $f^{-1}(0)$ is compact.
In the papers \cite{BGI,BP,G,Ry3} the authors defined
the equivariant gradient degree 
\[
\deg^\nabla_G\colon\G_G(V)\to\fU(G)
\]
and proved that the degree has the following properties:
additivity, otopy invariance, existence and normalization.
The product property formulated below was proved in 
\cite{G} and \cite{GR2}.
\begin{thm}[Product property]\label{thm:prod}
Let $V$ and $W$ be real finite dimensional orthogonal
representations of  a compact Lie group $G$.
If $f\in\G_G(V)$ and $f'\in\G_G(W)$, 
then $f\times f'\in\G_G(V\oplus W)$ and
\[
\deg^\nabla_G(f\times f')=
\deg^\nabla_G(f)\cdot\deg^\nabla_G(f')
\text{ in $\fU(G)$.}
\]
\end{thm}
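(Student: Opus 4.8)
The plan is to reduce the product property for the equivariant gradient degree to properties that are already established for $\deg^\nabla_G$, principally the otopy invariance and a normalization computation on linear models. First I would recall that each of the two gradient maps $f\in\G_G(V)$ and $f'\in\G_G(W)$ has compact zero set, so $f\times f'$ has zero set $f^{-1}(0)\times f'^{-1}(0)$, which is compact and invariant under the diagonal $G$-action on $V\oplus W$; this verifies $f\times f'\in\G_G(V\oplus W)$. The genuine content is the multiplicativity formula in $\fU(G)$.

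\medskip

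The core of the argument is a two-stage deformation. The first stage is to deform each factor, via an equivariant gradient otopy, into a convenient normal form near its zero set. Concretely, one shrinks the domain using the restriction property so that the relevant zeros are isolated (or organized into manageable orbit types), and then replaces $f$ and $f'$ by model maps built from the equivariant splitting of $V$ and $W$ into isotypical components together with linear data recording the sign structure (the index/coindex along each isotypical piece). Because the degree $\deg^\nabla_G$ is otopy invariant, the value on $f$ equals the value on its normal form, and likewise for $f'$; by the same invariance applied on $V\oplus W$, the value on $f\times f'$ equals the value on the product of the two normal forms. The second stage is to evaluate the degree on these products of linear models. Here the point is that the product of two linear gradient models is again a linear gradient model whose isotypical decomposition is the direct sum of the two, so the computation reduces to an algebraic identity in $\fU(G)$. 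This is where the multiplicative structure of the Euler--tom Dieck ring enters: one must check that the basis expansion $\sum d_{(H)}[G/H]$ of the product model factors as the product of the corresponding expansions, using $[G/H]\cdot[G/K]=[G/H\times G/K]$ and the compatibility of fixed-point sets and Euler characteristics with cartesian products, i.e.\ $(X\times Y)^H=X^H\times Y^H$ and the multiplicativity of the Euler characteristic.

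\medskip

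I expect the main obstacle to be the reduction to normal form in the equivariant setting and making the linear-model computation genuinely multiplicative in $\fU(G)$. The difficulty is twofold. First, the equivariant gradient degree is not merely an integer but an element of $\fU(G)$, so one cannot simply invoke the classical product formula for the Brouwer degree; one must track the full orbit-type information, and the linearized models must be chosen compatibly on both factors so that the diagonal $G$-action on $V\oplus W$ decomposes cleanly. Second, the multiplicativity in $\fU(G)$ relies on the fact that the map sending a suitable representation-theoretic datum to its class in $\fU(G)$ is a ring homomorphism; verifying this requires the interaction between the Euler characteristic of fixed-point sets and the product structure, and care is needed because $(X\times Y)^H = X^H\times Y^H$ holds while the Weyl-group quotients behave multiplicatively only after this fixed-point identification. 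Once the problem is localized to products of linear gradient maps and the ring identity $[X]\cdot[Y]=[X\times Y]$ is combined with the multiplicativity of Euler characteristics on fixed-point sets, the formula $\deg^\nabla_G(f\times f')=\deg^\nabla_G(f)\cdot\deg^\nabla_G(f')$ follows. Since the statement explicitly credits \cite{G} and \cite{GR2}, I would ultimately structure the proof to either cite those computations for the linear models or reproduce the key isotypical bookkeeping, with the otopy invariance doing the work of passing from the general $f,f'$ to their linearizations.
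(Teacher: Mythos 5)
First, a point of comparison: the paper does not actually prove Theorem~\ref{thm:prod} at all --- it states the product property and attributes the proof to \cite{G} and \cite{GR2}. So there is no internal argument to measure your proposal against; the relevant question is whether your sketch would stand on its own or faithfully reconstruct the cited proofs. At the level of strategy it does match what is done in those references (generic approximation, otopy invariance to pass to normal forms near the zero set, then an algebraic computation in $\fU(G)$), and your verification that $(f\times f')^{-1}(0)=f^{-1}(0)\times f'^{-1}(0)$ is compact, so that $f\times f'\in\G_G(V\oplus W)$, is correct and is the easy half of the statement.

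The genuine gap is in the second stage, which is where the entire content of the theorem lives. In the equivariant gradient setting the normal forms are not ``linear models with isolated zeros'': after a generic perturbation the zero set of $f$ is a finite union of nondegenerate critical \emph{orbits} $Gx_0\cong G/H$, each contributing a term of the form $(-1)^{m}[G/H]$ determined by the Morse index $m$ of the Hessian on the slice. For the product $f\times f'$ the zeros are the sets $Gx_0\times Gy_0\cong G/H\times G/K$ under the diagonal action, and this is in general \emph{not} a single orbit but a nondegenerate critical manifold decomposing into several orbit types. Evaluating $\deg^\nabla_G$ on a normal form around such a critical manifold, and showing that the result is $(-1)^{m+m'}[G/H\times G/K]=\bigl((-1)^{m}[G/H]\bigr)\cdot\bigl((-1)^{m'}[G/K]\bigr)$, is precisely the step you describe as ``an algebraic identity in $\fU(G)$'' --- but the identity $[G/H]\cdot[G/K]=[G/H\times G/K]$ is the \emph{definition} of the product in $\fU(G)$, and what must actually be proved is that the degree of the product normal form computes this class. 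Your sketch asserts this rather than establishing it, so as written the argument is circular at its key point. To close the gap you would either need to carry out the local computation for a nondegenerate critical manifold of the form $G/H\times G/K$ (tracking the slice representations of both factors and the Morse indices), or do what the paper does and cite \cite{G} or \cite{GR2}, where exactly this computation is performed.
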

In the next section we will make use of the following result, 
which can be found in \cite[Cor. 2.1]{GR1}.
\begin{thm}\label{thm:goryb}
Let $V$ be a real finite dimensional orthogonal representation
of a compact Lie group $G$. If $B$ is an equivariant 
self-adjoint isomorphism of $V$ then $\deg^\nabla_{G}(B)$
is invertible in $\fU(G)$.
\end{thm}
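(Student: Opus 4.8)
The plan is to reduce the statement to a concrete computation via the product property. Since $B$ is an equivariant self-adjoint isomorphism of $V$, its eigenvalues are real and nonzero, so $V$ splits $G$-invariantly as $V = V^+ \oplus V^-$, where $V^+$ (respectively $V^-$) is the sum of eigenspaces corresponding to positive (respectively negative) eigenvalues. On $V^+$ the restriction of $B$ is a positive-definite self-adjoint isomorphism, and on $V^-$ it is negative-definite; in either case the restriction is the gradient of a quadratic form. First I would use the otopy invariance property to deform the restriction of $B$ to $V^+$ to the identity $\id_{V^+}$, and the restriction to $V^-$ to $-\id_{V^-}$. A positive-definite self-adjoint $B^+$ is otopic to $\id_{V^+}$ through the straight-line homotopy $t \mapsto (1-t)B^+ + t\,\id$, which stays a positive-definite (hence gradient) isomorphism with the origin as its only zero, keeping the zero set compact; the analogous deformation works for $B^-$.

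Next I would apply the product property (Theorem~\ref{thm:prod}) to the splitting $B = B^+ \times B^-$ on $V = V^+ \oplus V^-$, which gives
\[
\deg^\nabla_G(B) = \deg^\nabla_G(B^+)\cdot\deg^\nabla_G(B^-)
\text{ in }\fU(G).
\]
By otopy invariance this equals $\deg^\nabla_G(\id_{V^+})\cdot\deg^\nabla_G(-\id_{V^-})$. The factor $\deg^\nabla_G(\id_{V^+})$ is the unit $[G/G]$ of $\fU(G)$ by the normalization property, so the whole degree reduces to $\deg^\nabla_G(-\id_{V^-})$. Thus the invertibility of $\deg^\nabla_G(B)$ is equivalent to the invertibility of $\deg^\nabla_G(-\id_{V^-})$.

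The heart of the matter is therefore to show that $\deg^\nabla_G(-\id_W)$ is invertible in $\fU(G)$ for any finite dimensional orthogonal representation $W$. I expect this to be the main obstacle, and I would handle it by exhibiting an explicit inverse. The natural candidate comes from the fact that $-\id_W$ composed with itself (via the product on an even number of copies, or via a suitable dimension count) should collapse to something trivial. Concretely, I would use that $\deg^\nabla_G(-\id_{W\oplus W}) = \deg^\nabla_G(-\id_W)^2$ by the product property, together with the observation that $-\id_{W\oplus W}$ is otopic to $\id_{W\oplus W}$: the representation $W\oplus W$ admits an equivariant complex structure, so rotation by the $G$-equivariant linear isotopy $t\mapsto e^{\pi i t}$ connects $-\id$ to $\id$ through orthogonal isomorphisms, all of which are gradients of the zero-only quadratic form. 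This shows $\deg^\nabla_G(-\id_W)^2 = [G/G]$, so $\deg^\nabla_G(-\id_W)$ is its own inverse and hence invertible.

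This completes the argument: $\deg^\nabla_G(B) = \deg^\nabla_G(-\id_{V^-})$ is invertible in $\fU(G)$ because its square is the ring unit. The only delicate point is ensuring that every deformation used is a genuine otopy in $\G_G(V)$, i.e.\ that the zero set stays compact throughout and that each intermediate map remains an equivariant gradient; since all intermediate maps are linear isomorphisms with the origin as their unique zero, compactness of the zero set is automatic and the gradient structure is preserved.
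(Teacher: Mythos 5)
First, a point of comparison: the paper does not actually prove Theorem~\ref{thm:goryb}; it quotes it from \cite[Cor.~2.1]{GR1}, so there is no in-paper argument to match yours against. On its own merits, the first half of your proposal is sound: the invariant splitting $V=V^+\oplus V^-$, the straight-line deformations of $B^{\pm}$ to $\pm\id$ (these are genuine gradient otopies, since every intermediate map is a self-adjoint equivariant isomorphism), the product property, and normalization correctly reduce the theorem to the invertibility of $\deg^\nabla_G(-\id_{V^-})$.

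The final step, however, contains a genuine error, and it sits exactly at the point you identified as the heart of the matter. The rotation $R_t=\cos(\pi t)\id+\sin(\pi t)J$ on $W\oplus W$ is not self-adjoint for $t\in(0,1)$, because $J$ is skew-adjoint; hence $R_t$ is not a gradient map, and the family is not an otopy in $\G_G(W\oplus W)$. Otopy invariance of $\deg^\nabla_G$ applies only to deformations through gradient maps, and this is not a repairable technicality: the identity $\deg^\nabla_G(-\id_W)^2=[G/G]$ is false in general. Take $G=S^1$ and $W=\mathbb{C}$ with the rotation action. Then $\deg^\nabla_{S^1}(-\id_W)=[S^1/S^1]-[S^1/e]$ (the nonvanishing of the coefficient at $[S^1/e]$ is precisely what makes Dancer's invariant finer than the ordinary equivariant degree, as recalled in the introduction), while $[S^1/e]\cdot[S^1/e]=0$ in $\fU(S^1)$ because the orbit space of the free diagonal action on $S^1\times S^1$ is a circle, of Euler characteristic zero. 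Hence the square is $[S^1/S^1]-2[S^1/e]\neq[S^1/S^1]$; the element is invertible, but its inverse is $[S^1/S^1]+[S^1/e]$, not itself. In other words, $-\id_{W\oplus W}$ and $\id_{W\oplus W}$ are linearly homotopic through equivariant isomorphisms but in general not through equivariant \emph{gradient} ones. A correct argument (as in \cite{GR1}) shows instead that $\deg^\nabla_G(-\id_{V^-})$ has coefficient $\pm1$ at $[G/G]$ and is triangular with respect to the partial order on orbit types, which yields invertibility in $\fU(G)$ without any claim about the square being the unit.
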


\begin{rem}
Note that Theorem \ref{thm:goryb} 
holds even if $V$ is trivial.
In this case $\deg^\nabla_{G}(B)$ 
is equal to the unit of $\fU(G)$.
\end{rem}

%XXXXXXXXXXXXXXXXXXXXXXXXXXXXXXX   Sec. Definition of degree  XXXXXXXXXXXXXXXXXXXXXXXXXXXXXXXX  

\section{Definition of degree}\label{sec:degree}
In this section we present the construction of the degree 
$\Deg^\nabla_G$ using finite dimensional approximations.

\subsection{Finite dimensional approximations}
Let us start with some notations:
\begin{itemize}
	\item for $\lambda\in\sigma(A)$ denote by $V(\lambda)$ 
	the corresponding eigenspace;
	\item for $n\in\N$ write 
	$\displaystyle V_n=\oplus_{\abs{\lambda}\le n}V(\lambda)$,
	$\displaystyle V^n=\oplus_{n-1<\abs{\lambda}\le n}V(\lambda)$ 
	and $A_n=\restrictionmap{A}{V^n}$;
	hence $V_n=V_{n-1}\oplus V^n$;
	\item let $P_n\colon E\to V_n$ denote the orthogonal projection.
\end{itemize}
Assume that $U$ is an open bounded invariant subset 
of $D_f$ such that
  \[f^{-1}(0)\subset U\subset\cl U\subset D_f.\]
Set $U_n=U\cap V_n$. 
Finally, let $f_n\colon U_n\to V_n$ be given by
\[f_n(x)=Ax-P_nF(x),\]
where $F(x)=\nabla\varphi(x)$.

The following two lemmas are needed 
to prove Lemma~\ref{lem:defn},
which is crucial for the definition of $\Deg^\nabla_G$.

\begin{lem}\label{lem:epsilon}
There is $\epsilon>0$ such that $\abs{f(x)}\ge2\epsilon$
for all $x\in\partial U$.
\end{lem}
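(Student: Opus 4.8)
The plan is to exploit compactness of the zero set together with continuity of $f$ on its domain. First I would observe that $f^{-1}(0)$ is compact and contained in the open set $U$, and since $\partial U$ is disjoint from $f^{-1}(0)$, the value $f(x)$ is nonzero for every $x\in\partial U$. The natural idea is to bound $\abs{f}$ away from zero on $\partial U$ by a positive constant, and then take $2\epsilon$ to be that constant.

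\medskip

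The key technical point is that $f$ is a continuous map from the open set $D_f\subset E_1$ into $E$, where $E_1$ carries the graph norm inner product $\langle\cdot\mid\cdot\rangle_1$. Indeed, $f(x)=Ax-\nabla\varphi(x)$, and $A$ is bounded as an operator from $E_1$ to $E$ (as noted in the preliminaries), while $\nabla\varphi$ is continuous. Hence $\abs{f(\cdot)}\colon D_f\to\R$ is a continuous real-valued function. The plan is then to show that $\abs{f}$ attains a positive minimum on $\partial U$, or at least admits a positive lower bound there.

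\medskip

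The main obstacle I anticipate is that $\partial U$ need not be compact in infinite dimensions, so I cannot simply invoke the extreme value theorem to conclude that $\abs{f}$ attains its infimum on $\partial U$. To get around this, I would argue by contradiction using the compactness of $f^{-1}(0)$ rather than that of $\partial U$. Suppose no such $\epsilon$ exists; then there is a sequence $x_k\in\partial U$ with $\abs{f(x_k)}\to 0$. I would like to extract from $\{x_k\}$ a subsequence whose behaviour forces a zero of $f$ to lie on $\partial U$, contradicting $f^{-1}(0)\subset U$ and the openness of $U$. The delicate step is producing a convergent (or otherwise controllable) subsequence: here the compact embedding $\imath\colon E_1\to E$ and the structure $f=A-\nabla\varphi$ should allow one to show that a sequence with $\abs{f(x_k)}\to 0$ accumulates at a point of $f^{-1}(0)$. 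Since that accumulation point would lie in $\cl U\setminus U=\partial U$ while every zero of $f$ lies in the open set $U$, we reach a contradiction.

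\medskip

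Concretely, I would verify that $f$ is \emph{proper} on bounded closed subsets of $\cl U$, or at least that preimages of small neighbourhoods of $0$ are precompact, so that the sequence $\{x_k\}\subset\partial U$ with $f(x_k)\to 0$ has a subsequence converging in $E_1$ to some $x_*\in\partial U$ with $f(x_*)=0$. This uses that $\cl U\subset D_f$ is bounded, that $\nabla\varphi$ sends bounded sets to precompact sets via the compact embedding, and that $A$ controls the $E_1$-norm. The conclusion $x_*\in f^{-1}(0)\cap\partial U=\emptyset$ is the desired contradiction, and so the required $\epsilon>0$ exists.
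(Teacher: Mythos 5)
Your proposal is correct and follows essentially the same route as the paper, whose proof is just the one-line remark that compactness of $F$ together with $\partial U$ being closed and bounded implies the claim; you have simply spelled out the contradiction argument (a sequence $x_k\in\partial U$ with $f(x_k)\to 0$ accumulates, via compactness of $F$ and the compact embedding, at a zero of $f$ on $\partial U$). The only step worth making fully explicit in a write-up is that closedness of the self-adjoint operator $A$ upgrades the $E$-convergence of $x_{k_j}$ and $Ax_{k_j}$ to convergence in the graph norm of $E_1$, which is what you allude to by saying that $A$ controls the $E_1$-norm.
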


\begin{proof}
The fact $F$ is compact and $\partial U$ is 
closed and bounded implies our claim.
\end{proof}

Let us introduce an auxiliary map
$\wt{f}_n\colon D_f\to E$ 
given by $\wt{f}_n(x)=Ax-P_nF(x)$.
By definition, $\restrictionmap{\wt{f}_n}{U_n}=f_n$.

\begin{lem}\label{lem:close}
There is $N$ such that for $n\ge N$ we have
\begin{enumerate}
	\item $\lvert f(x)-\wt{f}_n(x)\rvert<\epsilon$ 
	for $x\in\cl U$,
	\item $\lvert\wt{f}_n(x)\rvert>\epsilon$ 
	for $x\in\partial U$.
\end{enumerate}
\end{lem}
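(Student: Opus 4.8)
The plan is to exploit the compactness of $F$ together with the fact that the orthogonal projections $P_n$ converge strongly to the identity. First I would simply compute the difference: since $f(x)=Ax-F(x)$ and $\wt{f}_n(x)=Ax-P_nF(x)$, we have
\[
f(x)-\wt{f}_n(x)=\bigl(Ax-F(x)\bigr)-\bigl(Ax-P_nF(x)\bigr)=(P_n-I)F(x),
\]
so that $\abs{f(x)-\wt{f}_n(x)}=\abs{(I-P_n)F(x)}$ and the whole problem reduces to controlling the operators $I-P_n$ on the range $F(\cl U)$.

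Since $\cl U$ is closed and bounded and $F$ is compact, the set $K:=\cl\bigl(F(\cl U)\bigr)$ is a compact subset of $E$. Because $\{e_n\}$ is an orthonormal basis of eigenvectors of $A$ and $V_n$ contains all eigenspaces $V(\lambda)$ with $\abs{\lambda}\le n$, the condition $\lim\abs{\lambda_n}=\infty$ guarantees that every basis vector eventually lies in some $V_n$; hence the nested subspaces $V_n$ exhaust $E$ and $P_n\to I$ in the strong operator topology, that is, $\abs{(I-P_n)y}\to0$ for each fixed $y\in E$.

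The crucial step, which I expect to be the main obstacle, is upgrading this pointwise convergence to convergence that is \emph{uniform} on the compact set $K$. This follows from a standard $\epsilon/3$-type argument: each $I-P_n$ is again an orthogonal projection, so $\abs{(I-P_n)y}\le\abs{y}$ for all $y$ (a uniform bound), and $K$ can be covered by finitely many balls of small radius; controlling $I-P_n$ at the finitely many centres, together with the uniform bound, then controls it uniformly on all of $K$. Consequently there is $N$ such that $\sup_{y\in K}\abs{(I-P_n)y}<\epsilon$ for all $n\ge N$, with $\epsilon$ as in Lemma~\ref{lem:epsilon}. For such $n$ and every $x\in\cl U$ we obtain $\abs{f(x)-\wt{f}_n(x)}=\abs{(I-P_n)F(x)}<\epsilon$, which is assertion (1).

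Finally, assertion (2) follows by combining (1) with Lemma~\ref{lem:epsilon}. For $x\in\partial U\subset\cl U$ and $n\ge N$ the triangle inequality yields
\[
\abs{\wt{f}_n(x)}\ge\abs{f(x)}-\abs{f(x)-\wt{f}_n(x)}>2\epsilon-\epsilon=\epsilon,
\]
which completes the argument.
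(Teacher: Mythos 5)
Your proof is correct and takes essentially the same route as the paper, whose entire argument is the one-line remark that compactness of $F$ makes $P_nF$ uniformly close to $F$ on $\cl U$ and that (2) follows from (1) and Lemma~\ref{lem:epsilon}. You have merely supplied the standard details (strong convergence $P_n\to I$ upgraded to uniform convergence on the compact set $\cl(F(\cl U))$, then the triangle inequality), which is exactly what the authors leave implicit.
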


\begin{proof}
Since $F$ is compact, $F$ is close to $P_nF$, which gives (1).
In turn (2) follows from (1) and Lemma \ref{lem:epsilon}.
\end{proof}

\begin{lem}\label{lem:defn}
For $n\ge N$ we have $f_n\in\G_G(V_n)$ and, in consequence,
$\deg^\nabla_G(f_n)\in \fU(G)$ is well-defined.
\end{lem}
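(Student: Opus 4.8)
The plan is to verify that $f_n \in \G_G(V_n)$ by checking each of the three defining conditions of a local map, restricted to the finite dimensional representation $V_n$. Recall that $f_n \colon U_n \to V_n$ is given by $f_n(x) = Ax - P_n F(x)$, where $U_n = U \cap V_n$ is an open invariant subset of $V_n$.

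First I would establish the gradient structure. On $V_n$, which is a finite direct sum of eigenspaces, the operator $A$ is bounded, self-adjoint, and equivariant, so $A\colon V_n \to V_n$ is itself an equivariant gradient map (it is the gradient of the quadratic form $x \mapsto \tfrac{1}{2}\langle Ax \mid x\rangle$). For the second term, I would argue that $P_n F = P_n \nabla \varphi$ agrees on $V_n$ with the gradient of the restricted functional $\restrictionmap{\varphi}{V_n}$: since $P_n$ is the orthogonal projection onto the invariant subspace $V_n$ and $F = \nabla\varphi$, the chain rule gives $\nabla(\restrictionmap{\varphi}{V_n}) = P_n F$ on $V_n$. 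Hence $f_n$ is the gradient of $x \mapsto \tfrac{1}{2}\langle Ax\mid x\rangle - \restrictionmap{\varphi}{V_n}(x)$, an invariant $C^1$ functional, so $f_n$ is an equivariant gradient map. Equivariance of $f_n$ also follows directly from equivariance of $A$, of $F$ (as the gradient of an invariant functional), and of $P_n$ (orthogonal projection onto an invariant subspace commutes with the group action).

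The main obstacle, and the step requiring the lemmas just proved, is showing that $f_n^{-1}(0)$ is compact. Since $U_n \subset V_n$ lives in a finite dimensional space and $U$ is bounded, $U_n$ is bounded, so it suffices to show that $f_n^{-1}(0)$ is closed as a subset of $V_n$ and, crucially, that its zeros stay away from the boundary $\partial U_n$. For $n \ge N$, part~(2) of Lemma~\ref{lem:close} gives $\lvert \wt{f}_n(x)\rvert > \epsilon$ for all $x \in \partial U$, and since $\restrictionmap{\wt{f}_n}{U_n} = f_n$ this means $f_n$ has no zeros on $\partial U \cap V_n \supseteq \partial_{V_n} U_n$. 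Therefore $f_n^{-1}(0)$ is a closed subset of $U_n$ bounded away from the boundary, hence contained in a compact subset of the open set $U_n$, and being closed it is itself compact.

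Finally, having verified all three conditions, I would conclude that $f_n \in \G_G(V_n)$, so that $\deg^\nabla_G(f_n) \in \fU(G)$ is well-defined by the finite dimensional theory recalled in Section~\ref{sec:prel}. I expect the gradient and equivariance conditions to be essentially formal, while the genuine content lies in the compactness argument, where the role of choosing $n \ge N$ is precisely to rule out zeros escaping to the boundary via Lemma~\ref{lem:close}.
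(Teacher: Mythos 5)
Your proof is correct and follows essentially the same route as the paper: the gradient and equivariance conditions are treated as formal (the paper simply calls $f_n$ ``obviously gradient''), and the compactness of $f_n^{-1}(0)$ is obtained exactly as in the paper, by viewing $\wt{f}_n$ as a continuous extension of $f_n$ to $\cl U_n$ and using part~(2) of Lemma~\ref{lem:close} to exclude zeros on $\partial U_n\subset\partial U$. The only difference is that you spell out the gradient structure and the equivariance in more detail than the paper does, which is harmless.
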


\begin{proof}
Since $f_n$ is obviously gradient,
it is enough to check that $f_n^{-1}(0)$ is compact.
Note that $\wt{f}_n$ can be considered as an extension 
of $f_n$ on $\cl U_n$. 
By~(2) from Lemma~\ref{lem:close},
$\wt{f}_n$ does not have zeroes in 
$\partial U_n\subset\partial U$,
which implies that $f_n^{-1}(0)=\wt{f}_n^{-1}(0)\cap U_n$
is compact.
\end{proof}

\subsection{Degree definition}
Observe that $A_n$ is an equivariant self-adjoint 
isomorphism for $n\ge 1$. By Theorem \ref{thm:goryb},
elements $a_n:=\deg^\nabla_G(A_n)$
are invertible in $\fU(G)$.
Set $m_n:=a_1^{-1}\cdot a_2^{-1}\cdot\dotsb\cdot a_n^{-1}$.

\begin{defn}\label{defn:deg}	
Let $\Deg^\nabla_{G}\colon\mathcal{G}_{G}(E)\to\fU(G)$
be defined by
\[
\Deg^\nabla_{G}(f):=
m_n\cdot\deg^\nabla_G(f_n)
\]
for $n\ge N$. 
\end{defn}

An alternative definition of $\Deg^\nabla_{G}$ in terms 
of the direct limit is given in Appendix \ref{sec:dodatekA}.

%XXXXXXXXXXXXXXXXXXXXXXXXXXXXXXX   Sec. Correctness of the definition  XXXXXXXXXXXXXXXXXXXXXXXXXXXXXXXX  

\section{Correctness of the definition}\label{sec:correct}

We have to prove that our definition does not depend on 
the choice of $n$ and the neighbourhood $U$. 

\subsection{Independence from the choice 
of \texorpdfstring{$n$}{n}}
To show this we will need
the following lemma.

\begin{lem}\label{lem:susp}
For $n$ large enough
$f_{n+1}$ is otopic to 
$f_n\times A_{n+1}$ 
in $\mathcal{G}_G(V_{n+1})$ and hence
\[
\deg^\nabla_{G}(f_{n+1})=
\deg^\nabla_{G}(f_{n}\times A_{n+1}).
\]
\end{lem}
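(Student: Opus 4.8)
The plan is to exhibit an explicit gradient otopy joining $f_{n+1}$ to the map $g(z):=Az-P_nF(P_nz)$ on $U_{n+1}$, and then to identify $\restrictionmap{g}{U_{n+1}}$ with $f_n\times A_{n+1}$ by means of the restriction property. Writing $z=x+y$ with $x=P_nz\in V_n$ and $y=(P_{n+1}-P_n)z\in V^{n+1}$, a direct computation gives $(f_n\times A_{n+1})(x,y)=Ax-P_nF(x)+A_{n+1}y=g(x+y)$, so $f_n\times A_{n+1}=\restrictionmap{g}{U_n\times V^{n+1}}$. Since $g^{-1}(0)=f_n^{-1}(0)\times\{0\}$ is compact and lies in both $U_n\times V^{n+1}$ and $U_{n+1}$, applying the restriction property on the common open neighbourhood $(U_n\times V^{n+1})\cap U_{n+1}$ shows that $f_n\times A_{n+1}$ and $\restrictionmap{g}{U_{n+1}}$ are otopic. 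It therefore suffices to otope $f_{n+1}$ to $\restrictionmap{g}{U_{n+1}}$.

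For the otopy I would set $\psi_t(z)=(1-t)\varphi(z)+t\varphi(P_nz)$ and define $h\colon I\times U_{n+1}\to V_{n+1}$ by $h(t,z)=Az-\nabla\psi_t(z)$. Then each $h(t,\cdot)$ is equivariant and gradient, and since $\nabla(\varphi\circ P_n)(z)=P_nF(P_nz)$ one obtains $h(t,z)=Az-(1-t)P_{n+1}F(z)-tP_nF(P_nz)$, with $h(0,\cdot)=f_{n+1}$ and $h(1,\cdot)=\restrictionmap{g}{U_{n+1}}$. Because $\cl U_{n+1}$ is a compact subset of the finite dimensional space $V_{n+1}$, the set $h^{-1}(0)$ will be compact as soon as $h$ has no zeros on $I\times\partial U_{n+1}$, and establishing this is the heart of the matter.

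The key estimate splits the boundary equation $h(t,z)=0$ into its $V_n$- and $V^{n+1}$-components. Projecting onto $V^{n+1}$ yields $A_{n+1}y=(1-t)(P_{n+1}-P_n)F(z)$; since every eigenvalue of $A_{n+1}$ has modulus larger than $n$ and $\abs{(P_{n+1}-P_n)F(z)}\le\abs{(I-P_n)F(z)}\le\alpha_n$, where $\alpha_n:=\sup_{\cl U}\abs{(I-P_n)F}\to0$ by compactness of $F$ (cf.\ Lemma~\ref{lem:close}), this forces $\abs{y}\le\alpha_n/n\to0$. Thus at any boundary zero $z$ is necessarily close to its projection $x=P_nz$. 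Comparing $h(t,z)=0$ with the full map $f(z)=Az-F(z)$ gives $\abs{f(z)}\le\alpha_{n+1}+\alpha_n+\abs{F(z)-F(P_nz)}$, whereas Lemma~\ref{lem:epsilon} gives $\abs{f(z)}\ge2\epsilon$ on $\partial U\supset\partial U_{n+1}$.

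The main obstacle is exactly the term $\abs{F(z)-F(P_nz)}$: even though $\abs{z-P_nz}=\abs{y}\to0$, one cannot infer smallness of $\abs{F(z)-F(P_nz)}$ from continuity alone, as $F$ need not be uniformly continuous on bounded sets. The remedy is a compactness argument of the kind already built into the definition of $\G_G(E)$. If boundary zeros persisted along a subsequence $n_k\to\infty$, we would obtain points $z_k\in\partial U$ with $\abs{z_k-P_{n_k}z_k}\to0$ while $\abs{F(z_k)-F(P_{n_k}z_k)}\ge2\epsilon-\alpha_{n_k}-\alpha_{n_k+1}$ stays bounded below. Since $\{z_k\}$ is bounded in $E_1$ and the embedding $E_1\to E$ is compact, after passing to a subsequence $z_k\to z_*$ in $E$; then $P_{n_k}z_k\to z_*$ as well, so continuity of $F$ forces $\abs{F(z_k)-F(P_{n_k}z_k)}\to0$, a contradiction. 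Hence for $n$ large $h^{-1}(0)$ avoids $I\times\partial U_{n+1}$, so $h$ is an otopy in $\G_G(V_{n+1})$; chaining it with the restriction-property otopy above and invoking otopy invariance of $\deg^\nabla_G$ yields the claimed equality of degrees.
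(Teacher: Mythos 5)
Your proof is correct and follows essentially the same route as the paper: a linear (convex) homotopy between $f_{n+1}$ and $f_n\times A_{n+1}$ combined with the restriction property, concluded by otopy invariance of $\deg^\nabla_G$. The only differences are cosmetic or additive --- the paper manages domains by shrinking $U$ to a smaller invariant neighbourhood $W$ with $P_n(\cl W)\subset U_n$ instead of introducing your auxiliary map $g(z)=Az-P_nF(P_nz)$, and it merely asserts that the homotopy is nonzero on the boundary (``one can show that $h_{n+1}(t,x)\neq 0$\,''), whereas you actually prove this via the eigenvalue bound $\abs{A_{n+1}y}>n\abs{y}$ on $V^{n+1}$ together with the compact-embedding subsequence argument; that supplied estimate is the genuinely valuable part of your write-up.
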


\begin{proof}
First observe there is an open  
$W \subset U$ and natural number $N$ such that
\begin{itemize}
    \item $f^{-1}(0) \subset W \subset U$,
    \item $P_n(\cl W) \subset U_n$ for all $n \geq N.$
\end{itemize}
Define $h_{n+1}\colon I \times \cl W_{n+1} \to V_{n+1}$ by
\[
h_{n+1}(t,x)=(1-t)f_{n+1}(x)+t(f_n \times A_{n+1})(x).
\]
We set $n$ sufficiently large.
One can show that $h_{n+1}(t,x)\neq 0$ 
for $t \in I$ and $x \in \partial W_{n+1}$. In consequence,  
$\restrictionmap{h_{n+1}}{I \times W_{n+1}}$ 
is a finite dimensional equivariant gradient otopy 
between $\restrictionmap{f_{n+1}}{W_{n+1}}$ and 
$\restrictionmap{f_n \times A_{n+1}}{W_{n+1}}$ 
(otherwise there would be a point $x_0 \in \partial W$ 
such that $f(x_0)=0$, a contradiction).
On the other hand, by the restriction property,
$f_{n+1}$ and $f_n \times A_{n+1}$ 
are otopic to their restrictions to $W_{n+1}$, 
which completes the proof.
\end{proof}

From Lemma \ref{lem:susp} and Theorem \ref{thm:prod}
we can easily conclude that
\begin{multline*}
\deg^\nabla_{G}(f_{n+1})
\stackrel{\ref{lem:susp}}{=}
\deg^\nabla_{G}(f_{n}\times A_{n+1})
\stackrel{\ref{thm:prod}}{=}\\
\deg^\nabla_{G}(f_{n})\cdot
\deg^\nabla_{G}(A_{n+1})=
a_{n+1}\cdot\deg^\nabla_{G}(f_{n}).
\end{multline*}
This gives
\[
m_{n+1}\cdot\deg^\nabla_G(f_{n+1})=
m_{n+1}\cdot a_{n+1}\cdot\deg^\nabla_G(f_{n})=
m_{n}\cdot\deg^\nabla_G(f_{n}),
\]
which shows that $\Deg^\nabla_{G}(f)$ does not 
depend on the choice of $n$ large enough.

\subsection{Independence from the choice 
of \texorpdfstring{$U$}{U}}
According to our definition 
$\Deg^\nabla_{G}(f)=\Deg^\nabla_G(\restrictionmap{f}{U})$.
Now we will prove that in fact $\Deg^\nabla_{G}(f)$
is independent from the choice of the neighbourhood $U$.

\begin{lem}
Let $W$ and $U$ be open bounded sets such that 
\[
f^{-1}(0) \subset W \subset U \subset \cl U \subset D_f.
\] 
Then $\Deg^\nabla_G(\restrictionmap{f}{W})
=\Deg^\nabla_G(\restrictionmap{f}{U})$.
\end{lem}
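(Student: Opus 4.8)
The plan is to reduce the statement to a restriction property for the finite dimensional degree. The normalizing factor $m_n$ of Definition~\ref{defn:deg} depends only on the operator $A$ and not on the chosen neighbourhood, so, writing $U_n=U\cap V_n$ and $W_n=W\cap V_n$, the two invariants read
\[
\Deg^\nabla_G(\restrictionmap{f}{U})=m_n\cdot\deg^\nabla_G(\restrictionmap{\wt{f}_n}{U_n}),\qquad
\Deg^\nabla_G(\restrictionmap{f}{W})=m_n\cdot\deg^\nabla_G(\restrictionmap{\wt{f}_n}{W_n}),
\]
where both finite dimensional maps are restrictions of the single auxiliary map $\wt{f}_n$ and, since $W\subset U$, the second is merely the restriction of the first. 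Hence it suffices to prove that $\deg^\nabla_G(\restrictionmap{\wt{f}_n}{U_n})=\deg^\nabla_G(\restrictionmap{\wt{f}_n}{W_n})$ for all $n$ large enough.

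By the restriction property of the finite dimensional gradient degree---which follows from its otopy invariance exactly as recorded for local maps in Section~\ref{sec:prel}---this equality holds as soon as every zero of $\restrictionmap{\wt{f}_n}{U_n}$ already lies in the open invariant set $W_n$. So the heart of the matter is the localisation $(\restrictionmap{\wt{f}_n}{U_n})^{-1}(0)\subset W_n$ for $n$ large. To obtain it I would first establish a uniform lower bound $\abs{f(x)}\ge\delta$ on $\cl U\setminus W$, in the spirit of Lemma~\ref{lem:epsilon}, and then invoke the approximation estimate of Lemma~\ref{lem:close}(1) with threshold $\delta/2$: for $n$ large one gets $\abs{\wt{f}_n(x)}\ge\delta/2>0$ on $\cl U\setminus W$, so in particular $\restrictionmap{\wt{f}_n}{U_n}$ has no zeros on $\cl U_n\setminus W_n$, which forces its compact zero set into $W_n$.

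The main obstacle is the lower bound on $\cl U\setminus W$, because in $E_1$ this set is closed and bounded but not compact, so no extreme value argument is available. I would argue by contradiction, reproducing the compactness mechanism behind Lemma~\ref{lem:epsilon}. Given $x_k\in\cl U\setminus W$ with $f(x_k)\to 0$ in $E$, compactness of $F$ lets me pass to a subsequence along which $F(x_k)$ converges in $E$; then $Ax_k=f(x_k)+F(x_k)$ converges in $E$ as well. Using the compact embedding $E_1\hookrightarrow E$ I may also assume that $x_k$ converges in $E$, and closedness of the self-adjoint operator $A$ then places the limit $x$ in $E_1=D(A)$ and upgrades the convergence to the graph norm, i.e.\ to convergence in $E_1$. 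Since $\cl U\setminus W$ is closed in $E_1$ and $f\colon E_1\to E$ is continuous, the limit satisfies $x\in\cl U\setminus W$ and $f(x)=0$, contradicting $f^{-1}(0)\subset W$.

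Once the localisation is in hand, the restriction property gives $\deg^\nabla_G(\restrictionmap{\wt{f}_n}{U_n})=\deg^\nabla_G(\restrictionmap{\wt{f}_n}{W_n})$, and multiplying by the common factor $m_n$ yields $\Deg^\nabla_G(\restrictionmap{f}{U})=\Deg^\nabla_G(\restrictionmap{f}{W})$. The full independence from the neighbourhood then follows by comparing any two admissible sets with their intersection.
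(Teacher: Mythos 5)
Your proposal is correct and follows essentially the same route as the paper: a uniform lower bound for $\abs{f}$ on $\cl U\setminus W$ (the analogue of Lemma~\ref{lem:epsilon}), combined with the approximation estimate of Lemma~\ref{lem:close}, forces the zeros of $f_n$ on $U_n$ into $W_n$, after which the restriction property of $\deg^\nabla_G$ finishes the argument. You merely spell out in more detail the compactness mechanism (compactness of $F$, the compact embedding $E_1\hookrightarrow E$, and closedness of $A$) that the paper leaves implicit in its one-line proof of Lemma~\ref{lem:epsilon}, and that elaboration is sound.
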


\begin{proof}
By the analogue of Lemma \ref{lem:epsilon}
(with $\partial U$ replaced by $\cl U \setminus W$),
$\abs{f(x)}\ge2\epsilon$ for $x\in\cl U \setminus W$ and
by Lemma \ref{lem:close}, 
$\lvert f(x)-\wt{f}_n(x)\rvert< \epsilon$ 
for $x \in \cl U$. Hence $\wt{f}_n(x) \neq 0$ 
for $x \in \cl U \setminus W$. 
In consequence, $f_n(x) \neq 0$ 
for $x \in \cl U_n \setminus W_n$. 
Therefore
\[
\Deg^\nabla_G(\restrictionmap{f}{U})=
m_n\cdot\deg^\nabla_G(\restrictionmap{f_n}{U_n})=
m_n\cdot\deg^\nabla_G(\restrictionmap{f_n}{W_n})=
\Deg^\nabla_G(\restrictionmap{f}{W}).
\qedhere
\]
\end{proof}

\begin{cor}
Let $U$ and $U'$ be open bounded subsets of $D_f$ such that
\[
f^{-1}(0) \subset U \cap U' \subset \cl(U \cup U') \subset D_f.
\]
Then $\Deg^\nabla_G(\restrictionmap{f}{U})=
\Deg^\nabla_G(\restrictionmap{f}{U \cap U'})=
\Deg^\nabla_G(\restrictionmap{f}{U'})$.  
\end{cor}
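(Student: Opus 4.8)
The plan is to derive the corollary directly from the preceding lemma by applying it twice. The hypothesis $f^{-1}(0)\subset U\cap U'\subset\cl(U\cup U')\subset D_f$ is tailored so that $U\cap U'$ plays the role of the smaller set $W$ and each of $U$, $U'$ plays the role of the larger set $U$ in that lemma. First I would observe that $U\cap U'$ is open, bounded, and invariant, and that $f^{-1}(0)\subset U\cap U'$ by assumption. Moreover $U\cap U'\subset U$ and $\cl U\subset\cl(U\cup U')\subset D_f$, so the chain
\[
f^{-1}(0)\subset U\cap U'\subset U\subset\cl U\subset D_f
\]
holds. Applying the lemma with the inner set $W=U\cap U'$ and the outer set $U$ gives
\[
\Deg^\nabla_G(\restrictionmap{f}{U\cap U'})=\Deg^\nabla_G(\restrictionmap{f}{U}).
\]

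By the completely symmetric argument with $U'$ in place of $U$, the same chain
\[
f^{-1}(0)\subset U\cap U'\subset U'\subset\cl U'\subset D_f
\]
is valid, and a second application of the lemma yields
\[
\Deg^\nabla_G(\restrictionmap{f}{U\cap U'})=\Deg^\nabla_G(\restrictionmap{f}{U'}).
\]
Chaining these two equalities through the common value $\Deg^\nabla_G(\restrictionmap{f}{U\cap U'})$ gives the asserted triple equality, completing the proof.

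I do not anticipate any genuine obstacle here; the corollary is essentially a formal consequence of the lemma, and the only thing to verify carefully is that the hypotheses of the lemma are met in each of the two applications. The one point deserving a moment's attention is confirming the inclusion $\cl U\subset\cl(U\cup U')\subset D_f$ (and likewise for $U'$), which follows from monotonicity of closure together with the stated hypothesis $\cl(U\cup U')\subset D_f$. Everything else—openness, boundedness, and invariance of $U\cap U'$—is inherited from the corresponding properties of $U$ and $U'$, so the verification is routine.
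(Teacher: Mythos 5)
Your proposal is correct and is exactly the intended argument: the paper states this corollary without proof as an immediate consequence of the preceding lemma, applied twice with $W=U\cap U'$ and the outer set taken to be $U$ and then $U'$. Your verification of the hypothesis chains, in particular $\cl U\subset\cl(U\cup U')\subset D_f$, is the only point needing attention and you handle it correctly.
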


In this way we have proved that $\Deg^\nabla_G(f)$ 
does not depend on the choice of admissible $U$.

%XXXXXXXXXXXXXXXXXXXXXXXXXXXXXXX   Sec. Degree properties XXXXXXXXXXXXXXXXXXXXXXXXXXXXXXXX  

\section{Degree properties}\label{sec:properties}

In this section we prove that our degree 
$\Deg^\nabla_G\colon\G_G(E)\to \mathfrak{U}(G)$
has all properties analogous to the well-known
properties of the finite dimensional
equivariant gradient degree $\deg^\nabla_G$.

\begin{add}
If $f,f'\in\G_G(E)$ and $D_f\cap D_{f'}=\emptyset$ then
\[
\Deg^\nabla_G(f\sqcup f')=\Deg^\nabla_G(f)+\Deg^\nabla_G(f').
\]
\end{add}

\begin{otopy_in}
Let $f,f'\in\G_G(E)$. If $f$ is otopic to $f'$ then 
\[
\Deg^\nabla_G(f) = \Deg^\nabla_G(f').
\]
\end{otopy_in}

\begin{existence}
If $\Deg^\nabla_G(f) \neq 0$ then $f(x)=0$ for some $x \in D_f$.
\end{existence}

\begin{normal}
\[
\Deg^\nabla_G(A+P_0)=[G/G]=1_{\fU(G)},
\] 
where $P_0: E_1 \to V_0=\ker A$ is the orthogonal projection.
\end{normal}

\begin{product}
Let $E$ and $E'$ be real Hilbert orthogonal 
representations of a compact Lie group $G$.
If $f\in\G_G(E)$ and $f'\in\G_G(E')$, 
then $f\times f'\in\G_G(E\oplus E')$ and
\[
\Deg^\nabla_G(f\times f')=\Deg^\nabla_G(f)\cdot\Deg^\nabla_G(f'),
\]
where the dot here denotes the multiplication in $\fU(G)$.
\end{product}

\noindent\emph{Proof.}
\subsection*{Additivity}
Immediately from the additivity 
of $\deg^\nabla_G$ we obtain
\begin{multline*}
\Deg^\nabla_G(f\sqcup f')=
m_n\cdot\deg^\nabla_G(f_n\sqcup f'_n)=\\
m_n\cdot(\deg^\nabla_G(f_n)+\deg^\nabla_G(f'_n))=
\Deg^\nabla_G(f)+\Deg^\nabla_G(f').
\end{multline*}

\subsection*{Otopy invariance}
Let the map $h\colon\Lambda\subset I\times E_1\to E$
given by $h(t,x)=Ax-F(t,x)$ be an otopy.
We introduce the following notation:
\begin{align*}
\Lambda^t=&\{x\in E_1\mid(t,x)\in\Lambda\},
&h^t&\colon\Lambda^t\to E,
&h^t(x)&=h(t,x),\\
\Lambda_n=&\Lambda\cap(I\times V_n),
&h_n&\colon\Lambda_n\to V_n,
&h_n(t,x)&=Ax-P_nF(t,x),\\
\Lambda_n^t=&\Lambda^t\cap V_n,
&h_n^t&\colon\Lambda_n^t\to V_n,
&h_n^t(x)&=h_n(t,x).
\end{align*}
Note that for the needs of this subsection
the time parameter $t$ of the otopy is
a superscript, not a subscript.
According to the above notation
we have to show that $\Deg^\nabla_G(h^0)=\Deg^\nabla_G(h^1)$.
Since $h^{-1}(0)$ is compact, there is an open bounded set 
$W \subset I \times E_1$ such that 
\[
    h^{-1}(0) \subset W \subset \cl W \subset \Lambda.
\]
Hence for $i=0, 1$ we have
\[
(h^i)^{-1}(0) \subset W^i 
\subset \cl W^i \subset \Lambda^i,
\]
where $W^i=\{x\in E_1 \mid (i,x) \in W\}$.
Similarly as in Lemma \ref{lem:epsilon}, 
there is $\epsilon>0$ such that $\abs{h(z)}\ge 2 \epsilon$ 
for $z \in \partial W$. On the other hand, 
similarly as in Lemma \ref{lem:close}, 
there is $N$ such that 
$\big\lvert h(z)-\wt{h}_n(z)\big\rvert<\epsilon$ for 
$z \in \cl W$ and $n \ge N,$ where $\wt{h}_n: \Lambda \to E$ 
is given by $\wt{h}_n(t,x)=Ax-P_nF(t,x)$.
Therefore $\abs{h_n(z)}\ge\epsilon$ 
for $z\in\partial W_n\subset\partial W$.
From the above:
\begin{itemize}
    \item $\restrictionmap{h_n}{W_n}$ is 
		a finite dimensional equivariant gradient otopy,
    \item $\Deg^\nabla_G(h^i)=
		m_n\cdot\deg^\nabla_G(\restrictionmap{h_n^i}{W_n^i})$,
\end{itemize}
which, by the otopy invariance of $\deg^\nabla_G$, gives
\[
\Deg^\nabla_G(h^0)=
m_n\cdot\deg^\nabla_G(\restrictionmap{h_n^0}{W_n^0})=
m_n\cdot\deg^\nabla_G(\restrictionmap{h_n^1}{W_n^1})=
\Deg^\nabla_G(h^1).
\]

\subsection*{Existence}
If $f^{-1}(0)=\emptyset$ then $f$ is 
otopic with the empty map. Hence 
\[
\Deg^\nabla_G(f)=\Deg^\nabla_G(\emptyset)=0.
\]

\subsection*{Normalization} 
Observe that $A+P_0$ is an injection and
\[
\deg^\nabla_G((A+P_0)_n)=
\deg^\nabla_G(\restrictionmap{\id}{V_0})\cdot 
\deg^\nabla_G(A_1)\cdot
\ldots\cdot\deg^\nabla_G(A_n)=m_n^{-1}
\]
for any $n\ge1$. Hence
\[
\Deg^\nabla_G(A+P_0)=
m_n\cdot\deg^\nabla_G((A+P_0)_n)=[G/G].
\]

\subsection*{Product formula}
Let $f(x)=Ax-F(x)$ and $f'(x)=A'x-F'(x)$.
Observe that, by Theorem~\ref{thm:prod}, 
if $f_n\in\G_G(V_n)$ and $f_n'\in\G_G(V_n')$
then $f_n\times f_n'\in\G_G(V_n\oplus V_n')$ and
\[
\deg^\nabla_G(f_n\times f_n')=
\deg^\nabla_G(f_n)\cdot
\deg^\nabla_G(f_n').
\]
Moreover, for $n$ large enough
\begin{align*}
\Deg^\nabla_G(f)&=m_n\cdot\deg^\nabla_G(f_n),\\
\Deg^\nabla_G(f')&=m_n'\cdot\deg^\nabla_G(f_n').
\end{align*}
Since for any $i\ge1$
\[
\deg^\nabla_G((A \times A')_i)=
\deg^\nabla_G(A_i\times A_i')=
\deg^\nabla_G(A_i)\cdot\deg^\nabla_G(A_i'),
\]
we have
\begin{multline*}
\Deg^\nabla_G(f\times f')=
m_n\cdot m_n'\cdot\deg^\nabla_G(f_n \times f_n')=\\
m_n\cdot m_n'\cdot\deg^\nabla_G(f_n)
\cdot\deg^\nabla_G(f_n')=
\Deg^\nabla_G(f)\cdot \Deg^\nabla_G(f').
\end{multline*}

\qed

\begin{rem}
The normalization property 
can be formulated more generally,
but the proof of 
this fact will appear elsewhere.
Namely, let $x_0\in V_n$ and, 
in consequence, $G{x_0}\subset V_n$. 
Define
\begin{multline*}
U=\{x+y+z\mid x\in G{x_0},\; 
y\in\big(T_{x_0}(G{x_0})\big)^{\bot}\subset V_n,\;\\
\abs{y}<\epsilon,\;
z\in\big(V_n\big)^{\bot}\subset E_1\}
\end{multline*}
and $f\colon U\to E$ by
\[
f(x+y+z)=(A+P_0)(y+z).
\]
Then $\Deg^\nabla_G(f)=[G/G_{x_0}]$.
\end{rem}

%XXXXXXXXXXXXXXXXXXXXXXXXXXXXXXX   Sec. Possible applications XXXXXXXXXXXXXXXXXXXXXXXXXXXXXXXX  

\section{Possible applications}\label{sec:applic}
We should emphasize that this section contains 
not real applications of the theory but only two 
exemplary situations illustrating potential applications.

\subsection{Applications to Hamiltonian systems}
The search for periodic solutions in Hamiltonian systems
is one of the fundamental problems in nonlinear analysis
(see for instance \cite{BS,Ra1,Ra2,W}).
Consider the Hamiltonian
system of	ODE
\[
\frac{dp}{dt}=-H_q,\qquad
\frac{dq}{dt}=H_p,
\]
where $H\in C^1(\R^{2n},\R)$ 
and $p,q\in\R^n$ or equivalently
\[
\frac{dz}{dt}=\mathcal{J}H_z,
\]
where $z=(p,q)$ and
\[
\mathcal{J}=
\begin{pmatrix}
0& -I\\
I& \;\;\;0
\end{pmatrix}.
\]
The function $H$ is called the hamiltonian or energy.

Rewrite the Hamiltonian system as
\[\tag{$*$}
\dot{z}=\mathcal{J}\nabla H(z), \quad z\in\mathbb{R}^{2n}
\]
or equivalently
$-\mathcal{J}\dot{z}-\nabla H(z)=0$.

We are searching for solutions $z\in H_T^1$ 
of the equation ($*$),
where $H_T^1$ ($T>0$) denotes 
the completion of the set of smooth $T$-periodic
functions from $\R$ to $\R^{2n}$ 
in the norm associated to the inner product
$(u\,\vert\, v)_{H_T^1}=
\int_0^T uv\,dt+\int_0^T \dot{u}\dot{v}\,dt$.
For this purpose we apply the method 
of the topological degree $\Deg^\nabla_{S^1}$.
Namely, let $E=L^2(S^1,\mathbb{R}^{2n})$ 
and $E_1=H^1(S^1,\mathbb{R}^{2n})$.
Moreover, denote by $D$ the set $E_1$ 
equipped with the inner product from $E$.

Observe that
\begin{itemize}
	\item $E$ and $E_1$ are Hilbert spaces and orthogonal
	representations of the group $SO(2)=S^1$
	with the $S^1$-action given by the shift in time,
	\item $A\colon D\to E$ given by $Az=-\mathcal{J}\dot{z}$ 
  is an equivariant unbounded self-adjoint operator 
	with a purely discrete spectrum, 
	\item $\nabla H(z)$ is a gradient of the invariant functional
  $\varphi\colon E\to\R$ defined by 
	$\varphi(z)=\int_0^{2\pi}H(z(t))\,dt$,
  \item $\nabla H\circ\imath\colon E_1\to E$ is a compact map
	by the compactness of the~inclusion $\imath\colon E_1\to E$.	
\end{itemize}

We can now formulate the main result of this subsection.

\begin{thm}
Assume that $\lambda>0$ and the set of zeros of the map 
$f_\lambda(z)=-\mathcal{J}\dot{z}-\lambda\nabla H(z)$
is compact. If $\Deg^\nabla_{S^1}(f_\lambda)\neq0$ 
then the equation \textup{($*$)}
has a~solution in $H_{2\pi\lambda}^1$.
\end{thm}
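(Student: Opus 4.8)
\emph{Proof proposal.} The plan is to combine the existence property of the degree $\Deg^\nabla_{S^1}$ with a time-rescaling argument. First I would confirm that $f_\lambda$ is an admissible local map, i.e. $f_\lambda\in\G_{S^1}(E)$. Writing $f_\lambda(z)=Az-\lambda\nabla H(z)$, the operator $Az=-\mathcal{J}\dot{z}$ is, as recorded in the bulleted observations above, an equivariant unbounded self-adjoint operator on $E=L^2(S^1,\R^{2n})$ with purely discrete spectrum, while $\lambda\nabla H$ is the gradient of the invariant $C^1$ functional $z\mapsto\lambda\int_0^{2\pi}H(z(t))\,dt$ and, thanks to the compactness of the inclusion $\imath\colon E_1\to E$, is a compact perturbation. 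Since the zero set of $f_\lambda$ is compact by hypothesis, $f_\lambda$ indeed belongs to $\G_{S^1}(E)$ and $\Deg^\nabla_{S^1}(f_\lambda)$ is well-defined.

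Next, because $\Deg^\nabla_{S^1}(f_\lambda)\neq0$, the existence property furnishes a point $z\in D_{f_\lambda}\subset E_1=H^1(S^1,\R^{2n})$ with $f_\lambda(z)=0$; equivalently, $z$ is a $2\pi$-periodic function satisfying $-\mathcal{J}\dot{z}=\lambda\nabla H(z)$. Multiplying by $\mathcal{J}^{-1}=-\mathcal{J}$ (recall $\mathcal{J}^2=-I$) I would rewrite this as $\dot{z}=\lambda\mathcal{J}\nabla H(z)$.

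Finally I would undo the rescaling hidden in the parameter $\lambda$. Setting $w(t)=z(t/\lambda)$, the chain rule gives $\dot{w}(t)=\lambda^{-1}\dot{z}(t/\lambda)=\mathcal{J}\nabla H(w(t))$, so $w$ solves ($*$), and the $2\pi$-periodicity of $z$ forces $w(t+2\pi\lambda)=z(t/\lambda+2\pi)=w(t)$, i.e. $w$ is $2\pi\lambda$-periodic; a routine change of variables places $w$ in $H_{2\pi\lambda}^1$. The bulk of the work is carried by the degree theory, so no serious obstacle remains; the only points demanding care are the verification that $f_\lambda\in\G_{S^1}(E)$—where the compactness of $\nabla H\circ\imath$ is the structural fact making $f_\lambda$ an admissible gradient perturbation of $A$—and the bookkeeping of periods and Sobolev regularity under the rescaling $w(t)=z(t/\lambda)$.
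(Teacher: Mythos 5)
Your proposal is correct and follows essentially the same route as the paper: verify that the compactness hypothesis makes $f_\lambda$ an element of $\G_{S^1}(E)$, invoke the existence property to produce a zero $z\in E_1$, and then pass to a $2\pi\lambda$-periodic solution of ($*$). The only difference is cosmetic: where the paper phrases the last step as taking a lift $\wt{z}(t)=z(\rho(t))$ along the covering $\rho\colon\R\to S^1$, you carry out the time rescaling $w(t)=z(t/\lambda)$ explicitly, which is arguably clearer.
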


\begin{proof}
First note that if $f_\lambda^{-1}(0)$ is compact then
$f_\lambda$ is an element of $\mathcal{G}_{S^1}(E)$.
By the existence property, $\Deg^\nabla_{S^1}(f_\lambda)\neq0$
implies that $f_\lambda(z)=0$ for some $z\in E_1$. 
Hence a lift $\wt{z}\in H_{2\pi\lambda}^1$ 
of $z$ given by $\wt{z}(t)=z(\rho(t))$,
where $\rho\colon\R\to S^1$
is the standard covering projection,
is a solution of~($*$), which is our claim.
\end{proof}

\subsection{Applications to the Seiberg-Witten equations}
The description of the Seiberg-Witten equations
presented here is necessarily sketchy
(for more details we refer the reader to \cite{B,M,N,Sa}).
Let $M$ be a closed oriented Riemannian $3$-manifold.
A Spin$^{c}$-structure on $M$ consists of rank two Hermitian
vector bundle $S\to M$ called the \emph{spinor bundle}.
We write $\Omega^1(M,i\R)$ for the space of smooth
imaginary-valued $1$-forms on $M$ and $\Gamma(S)$
for the space of smooth cross-sections of the spinor bundle
$S\to M$. For each $a\in\Omega^1(M,i\R)$ there is an associated
\emph{Dirac operator} $D_a\colon\Gamma(S)\to\Gamma(S)$.

Recall that, in what follows, $d$ stands 
for the exterior derivative and $*$ denotes the Hodge star.
For a pair $(a,\varphi)\in\Omega^1(M,i\R)\oplus\Gamma(S)$
the \emph{Seiberg-Witten equations} are 
\[
\begin{cases}
D_a\varphi=0\\
*da=Q(\varphi),
\end{cases}
\]
where $Q(\varphi)\in\Omega^1(M,i\R)$ is a certain
quadratic form (nonlinear part of the equations).
The solutions of Seiberg-Witten equations 
are zeros of the \emph{Seiberg-Witten map}
\[\text{SW}\colon\Omega^1(M,i\R)\oplus\Gamma(S)
\to\Omega^1(M,i\R)\oplus\Gamma(S)\]
given by 
\[
\text{SW}(a,\varphi)=(*da-Q(\varphi),-D_a\varphi).
\]
After suitable Sobolev completion the Seiberg-Witten map
$\text{SW}$ can be written in the form $A-F$,
where $A=(*da,-D_a\varphi)$ is an unbounded self-adjoint
operator with a purely discrete spectrum 
and $F$ is a gradient map.
Moreover, the Seiberg-Witten map is equivariant 
for the action of the group $S^1$, 
which acts trivially on the component
arising from the differential forms 
and as complex multiplication
on the spinor component. 
It suggests that the $\text{SW}$ map
should fit to our abstract setting 
of the degree $\Deg^\nabla_{S^1}$.
Unfortunately, the set of zeros of the $\text{SW}$ 
map is not compact. However, we hope that
it is possible to reduce our problem to some
subspace of $\Omega^1(M,i\R)$ in such a way
that the reduced $\text{SW}$ map will have
a compact set of zeros,
which will be contained in the set 
of zeros of the original $\text{SW}$ map.
Verifying this claim is, however, still in progress.

%XXXXXXXXXXXXXXXXXXXXXXXXXXXXXXX   Appendix   XXXXXXXXXXXXXXXXXXXXXXXXXXXXXXXXXXX    
\appendix
\section{} 
\label{sec:dodatekA}

Definition \ref{defn:deg} may be seen 
as a simple particular case
of a more general construction called 
the \emph{direct limit of a direct system of groups}.
Namely, for $i=0,1,\dotsc$ 
let $G_i$ denote an abelian group and 
$\alpha_i\colon G_i\to G_{i+1}$ a group homomorphism. 
With this notation we get the sequence
\[
G_0\stackrel{\alpha_0}{\longrightarrow}
G_1\stackrel{\alpha_1}{\longrightarrow}
G_2\stackrel{\alpha_2}{\longrightarrow}
G_3\to\dotsb
\]
Let $\wt{G}:=\coprod_{i=0}^{\infty}G_i$ denote a disjoint union, i.e.
\[
\wt{G}=\{(i,m)\mid i\in\N,\; m\in G_i\}.
\]
We introduce in $\wt{G}$ an equivalence relation. For $i>j$ we write
$(i,m)\sim (j,l)$ if
\[
\alpha_{i-1}\circ\dotsb\circ\alpha_{j+1}\circ \alpha_j(l)=m.
\] 
The \emph{direct limit of groups} is the set of equivalence classes
of the above relation, denoted by
\[
\varinjlim G_i=\wt{G}/\sim.
\]
Let $\varinjlim\fU(G)$ denote 
a direct limit of groups, where
\begin{itemize}
	\item $G_i=\fU(G)$ for all $i$,
	\item $\alpha_i$ is multiplication by an element
	 $a_i=\deg^\nabla_G(A_i,V^i)\in\fU(G)$.	
\end{itemize}
With this notation we can alternatively define our degree as a function
$\Deg^\nabla_{G}\colon\mathcal{G}_{G}(E)
\to\varinjlim\fU(G)\approx\fU(G)$ 
given by 
\[
\Deg^\nabla_{G}(f):=[(n,\deg^\nabla_G(f_n,U_n))]
\]
for $n$ large enough.

\end{document}